\numberwithin{equation}{section}
\newtheorem{theorem}{Theorem}[section]
\newtheorem{proposition}[theorem]{Proposition}
\newtheorem{remark}{Remark}[section]
\newtheorem{example}{Example}[section]
\newcommand{\OMIT}[1]{{\bf [OMIT:} #1 \ {\bf --- end OMIT] }}  
   \renewcommand{\OMIT}[1]{}            
\newcommand{\RR}{{\mathbb{R}}}
\newcommand{\ZZ}{{\mathbb{Z}}}
\newcommand{\Rinf}{\RR \cup \{ +\infty \}}
\newcommand{\Rminf}{\RR \cup \{ -\infty \}}
\newcommand{\veczero}{{\bf 0}}
\newcommand{\suppp}{{\rm supp}\sp{+}}
\newcommand{\suppm}{{\rm supp}\sp{-}}
\newcommand{\unitvec}[1]{\bm{1}\sp{#1}}
\newcommand{\diag}{{\rm diag\,}}
\newcommand{\rank}{{\rm rank\,}}
\newcommand{\linl}{\mathrm{lineal}}
\newcommand{\lins}{\mathrm{linsp}}
\newcommand{\Lnat}{{L$^{\natural}$}}
\newcommand{\Mnat}{{M$^{\natural}$}}
\newcommand{\BnvexZb}{\mbox{\rm\bf (B$\sp{\natural}$-EXC[$\mathbb{Z}$])}}
\newcommand{\finbox}{\hspace*{\fill}$\rule{0.2cm}{0.2cm}$}
\newcommand{\todaye}{\the\year/\the\month/\the\day}
\begin{document}

\title{
Note on Minkowski Summation and Unimodularity \\
in Discrete Convex Analysis
}

\author{
Kazuo Murota%
\thanks{
The Institute of Statistical Mathematics,
Tokyo 190-8562, Japan; 
and
Faculty of Economics and Business Administration,
Tokyo Metropolitan University, 
Tokyo 192-0397, Japan,
murota@tmu.ac.jp}
\ and 
Akihisa Tamura%
\thanks{Department of Mathematics, Keio University, 
Yokohama 223-8522, Japan,
aki-tamura@math.keio.ac.jp}
}

\date{December 2023 / April 2024}

\maketitle

\begin{abstract}
This short note gives an elementary alternative proof for  
a theorem of Danilov and Koshevoy 
on Minkowski summation and unimodularity
in discrete convex analysis.
It is intended to disseminate this fundamental theorem and
make its proof accessible to researchers in optimization
and operations research.
\end{abstract}

{\bf Keywords}:
Discrete optimization, 
discrete convex analysis,  
\Mnat-convex set,
Minkowski sum,
unimodular matrix






\section{Introduction}
\label{SCintro}

Danilov--Koshevoy \cite{DK04uni}
made a fundamental contribution to 
our understanding of ``discrete convexity" in general.
For a class $\mathcal{C}$ 
of subsets of $\ZZ\sp{n}$,
they first identified desired properties that 
allow us to call $\mathcal{C}$ a class of discrete convex sets,
and clarified their mutual relations (often equivalences) among these properties.
The main result of \cite{DK04uni} is that 
a class subsets of $\ZZ\sp{n}$ that is qualified to be called ``discrete convexity"
can be constructed from a unimodular system, and vice versa.
As the name suggests, unimodular systems are closely related to unimodular matrices.
A survey paper of Koshevoy \cite{Kos11} describes subsequent development of this approach.

Minkowski summation is an intriguing operation in discrete setting.
This is fully recognized in discrete convex analysis \cite{Mdcasiam}
and indeed, the closedness under Minkowski summation is 
one of the desired properties of $\mathcal{C}$ discussed in \cite{DK04uni}.

For any sets $S_{1}$, $S_{2} \subseteq \RR\sp{n}$,
we denote their {\em Minkowski sum} (or {\em vector sum})
by $S_{1}+S_{2}$, that is,
\begin{equation*}  
S_{1}+S_{2} = \{ x + y \mid x \in S_{1}, \  y \in S_{2} \} .
\end{equation*}
If $S_{1}$ and $S_{2}$ are convex, then $S_{1}+S_{2}$ is also convex.
For any (possibly non-convex) sets $S_{1}$ and $S_{2}$ we have
\begin{equation} \label{minkowR}
\overline{S_{1}+ S_{2}} =  \overline{S_{1}} + \overline{S_{2}},
\end{equation}
where $\overline{S}$ denotes the convex hull of 
any set $S \subseteq \RR\sp{n}$.
In this connection it is emphasized that 
for discrete sets 
$S_{1}$, $S_{2} \subseteq \ZZ\sp{n}$, the relation
\begin{equation} \label{minkowNohole}
   S_{1}+S_{2}  = ( \overline{S_{1}+ S_{2}}) \cap \ZZ\sp{n} 
\end{equation}
does not always hold, even when  
$S_{i}  = \overline{S_{i}} \cap \ZZ\sp{n}$
for $i=1,2$.   The following example shows this.

\begin{example}[{\cite[Example 3.15]{Mdcasiam}}] \rm \label{EXicdim2sumhole}
The Minkowski sum of
$S_{1} = \{ (0,0), (1,1) \}$
and
$S_{2} = \{ (1,0),  \allowbreak   (0,1) \}$
is equal to 
$S_{1}+S_{2} = \{ (1,0), (0,1), (2,1), (1,2) \}$.
The relation \eqref{minkowNohole} fails,
since
$(1,1) \in (\overline{S_{1}+S_{2}}) \setminus (S_{1}+S_{2})$.
\finbox
\end{example}

Let $\mathcal{C}$ be a class of subsets of $\ZZ\sp{n}$
that has ``no-hole property," that is,
\begin{equation} \label{onesetNohole}
   S  =  \overline{S} \cap \ZZ\sp{n} 
\qquad (S \in \mathcal{C}).
\end{equation}
The following three categories of $\mathcal{C}$ can be distinguished
with respect to the Minkowski sum of two sets in $\mathcal{C}$.

\begin{enumerate}
\item
$S_{1}+S_{2}  \ne ( \overline{S_{1}+ S_{2}}) \cap \ZZ\sp{n}$
for some $S_{1}, S_{2} \in \mathcal{C}$.
See  Example~\ref{EXicdim2sumholeIC} below.


\item
$S_{1}+S_{2}  = ( \overline{S_{1}+ S_{2}}) \cap \ZZ\sp{n}$ 
for all $S_{1}, S_{2} \in \mathcal{C}$,
but
$S_{1} + S_{2} \notin  \mathcal{C}$
for some $S_{1}, S_{2} \in  \mathcal{C}$.
See Example~\ref{EXlnatsetMsum} below.


\item
$S_{1} + S_{2} \in  \mathcal{C}$
for all $S_{1}, S_{2} \in \mathcal{C}$.
Then, by \eqref{onesetNohole},
$S_{1}+S_{2}  = ( \overline{S_{1}+ S_{2}}) \cap \ZZ\sp{n}$ 
is also true
for all $S_{1}, S_{2} \in \mathcal{C}$.

\end{enumerate}

In this paper we are interested in $\mathcal{C}$
in the third category, i.e., those 
classes $\mathcal{C}$ which are closed under Minkowski summation.
As is well known in discrete convex analysis,
the class of \Mnat-convex sets 
(see Section~\ref{SCpreMnatset} for the definition)
is such a class, and
by virtue of this property,
\Mnat-convexity finds applications in economics etc.
 \cite{DKM01,Mdcaeco16}.
An important finding of Danilov--Koshevoy \cite{DK04uni}
is that the class of \Mnat-convex sets is not the unique class in
the third category, but other classes can be constructed from
unimodular matrices.
The objective of this note is to give an elementary alternative proof for  
this theorem connecting Minkowski summation and unimodularity.
It is intended to disseminate this fundamental theorem 
among researchers in optimization and operations research,
together with a proof written in the language and style familiar to them.

\begin{example}\rm \label{EXicdim2sumholeIC}
The two sets $S_{1}$ and $S_{2}$ in Example \ref{EXicdim2sumhole}
are integrally convex sets; 
see \cite[Section~3.4]{Mdcasiam} 
for the definition of integral convexity.
This demonstrates that the class $\mathcal{C}$ of integrally convex sets
falls into the first category above.
\finbox
\end{example}

\begin{example}[{\cite[Example 3.11]{MS01rel}}] \rm \label{EXlnatsetMsum}
The Minkowski sum of
$S_{1} =  \{(0, 0, 0), (1, 1, 0)\}$
and
$S_{2} = \{(0, 0, 0), (0, 1, 1)\}$
is given by
$S_{1} + S_{2} = \{(0, 0, 0), (0, 1, 1), (1, 1, 0), (1, 2, 1)\}$.
The identity \eqref{minkowNohole} holds.
The two sets $S_{1}$ and $S_{2}$ are \Lnat-convex;
see \cite[Section~5.5]{Mdcasiam} for the definition of \Lnat-convex sets.
Their Minkowski sum $S_{1} + S_{2}$ is not \Lnat-convex,
since for
$x=(0, 0, 0), y=(1, 2, 1) \in S$,
we have
$\left\lceil (x+y)/2 \right\rceil = (1, 1, 1) \notin S$, 
$\left\lfloor (x+y)/2 \right\rfloor = (0, 1, 0) \notin S$,
a violation of the discrete midpoint convexity
(characterization of \Lnat-convexity).
It is known 
(\cite[Theorem~8.42]{Mdcasiam}, \cite[Proposition 3.14]{Msurvop21})
that the class $\mathcal{C}$ of \Lnat-convex sets
has the weaker property \eqref{minkowNohole}.
That is, the class of \Lnat-convex sets
falls into the second category above.
\finbox
\end{example}

Minkowski summation for other classes of discrete convex sets 
are summarized in Section~3.5 and Table 4 of 
a survey paper \cite{Msurvop21}
on operations on discrete convex sets and functions.

This paper is organized as follows. 
Section~\ref{SCprelim} is devoted to preliminaries 
on polyhedra, unimodular matrices, and \Mnat-convexity.
The theorem is described in Section~\ref{SCtheorem}
and the elementary proof is given in Section~\ref{SCproof}.


\section{Preliminaries}
\label{SCprelim}

\subsection{Polyhedra}
\label{SCprePolyh}


A subset $P$ of $\RR\sp{n}$ is called a 
{\em polyhedron}
if it is described by a finite number of linear inequalities,
that is,
$P = \{ x \mid Cx \leq b \}$ for some matrix $C$ and a vector $b$.
A subset $Q$ of $\RR\sp{n}$ is called a {\em polytope}
if it is the convex hull of a finite number of points,
that is, $Q = \overline{S}$ for a finite subset $S$ of $\RR\sp{n}$,
where $\overline{S}$ denotes the convex hull of $S$. It is known 
that a polytope is nothing but a bounded polyhedron. 
The reader is referred to \cite{Gru03,Sch86,Zie07} for basic facts about polyhedra.

The {\em lineality space}
of a polyhedron $P$ $(\subseteq \RR\sp{n})$,
denoted by $\linl(P)$,
is defined as 
\begin{equation} \label{linldef}
 \linl(P) = \{ d \in \RR\sp{n} \mid x + \lambda d \in P 
 \ \mbox{ for all }\  x \in P, \lambda \in \RR \} ,
\end{equation}
which admits an alternative expression
\begin{equation} \label{linldef2}
 \linl(P) = \{ d \in \RR\sp{n} \mid x + \lambda d \in P 
 \ \mbox{ for all }\  \lambda \in \RR \} 
\end{equation}
with an arbitrarily fixed $x \in P$.
If $P=\{ x \in \RR\sp{n} \mid Cx  \le b \}$,
then 
$\linl(P)=\{ d \in \RR\sp{n} \mid Cd  = \veczero \}$.
We have $\linl(P) = \{ \veczero \}$
if and only if $P$ has a vertex. 
For each face $F$ of $P$, we have
\begin{equation} \label{linlPlinlF}
\linl(F)=\linl(P) .
\end{equation}

A polyhedron is said to be
{\em rational}
if it is described by a finite number of linear inequalities with
rational coefficients.
A polyhedron $P$ is an {\em integer polyhedron}
if $P=\overline{P \cap \ZZ\sp{n}}$, i.e., if
it coincides with the convex hull
of the integer points contained in it,
or equivalently, if
$P$ is rational and each face of $P$ contains an integer point.

\subsection{Unimodular matrices}
\label{SCpreMat}

A square matrix consisting of integer entries 
is called 
{\em unimodular}
if its determinant is $1$ or $-1$.
More generally,
an $n \times m$ matrix $A$ of integer entries
is called 
{\em unimodular}
if $\rank A = n$ (row-full rank)
and
each of its minor (subdeterminant) of order $n$ 
is an element in $\{ 0, +1, -1 \}$.
A matrix of integer entries
is called 
{\em totally unimodular}
if every minor (of any order)
is an element in $\{ 0, +1, -1 \}$.
When an $n \times m$ matrix $A$ is of the form
$A = [ I_{n} \mid C ]$,
where $I_{n}$ denotes the identity matrix of order $n$, 
 we have equivalences:
\[
\mbox{$A$ is unimodular $\Leftrightarrow$
$A$ is totally unimodular $\Leftrightarrow$
$C$ is totally unimodular.}
\]
Fundamental facts about (totally) unimodular matrices
are available in \cite[Chapters 19--21]{Sch86}.
It is particularly important to our paper that
a complete classification of totally unimodular matrices are known.
Network matrices form a major class of totally unimodular matrices,
whereas there exist totally unimodular matrices 
that are essentially different from network matrices.
As is fully recognized in optimization and operations research,
an integer optimization problems (IP) described by a unimodular matrix
admits an LP relaxation free from integrality gap.
In more general terms, unimodularity allows 
us to embed a discrete problem into a continuous setting.
Thus unimodularity is one of the key properties 
from structural and computational perspectives.

\subsection{\Mnat-convex polyhedra and \Mnat-convex sets}
\label{SCpreMnatset}

Let $N = \{ 1,2,\ldots, n \}$.
A g-polymatroid 
\cite{Fra11book,Fuj05book}
is a polyhedron $P$ described as
\begin{equation} \label{mnatsetineqR}
 P = \{ x \in \RR\sp{n} \mid 
  \mu(X) \leq x(X) \leq \rho(X)
   \ (X \subseteq N)  \},
\end{equation}
where $x(X) = \sum_{i \in X} x_{i}$, 
with a submodular function $\rho: 2\sp{N} \to \Rinf$
and a supermodular function $\mu: 2\sp{N} \to \Rminf$
satisfying the condition (called {\em paramodularity})
\begin{equation} \label{subsupmod}
 \rho(X) - \mu(Y) \geq  \rho(X \setminus Y) - \mu(Y \setminus X)  
\qquad (X, Y \subseteq N).
\end{equation}
It is assumed that
$\rho(\emptyset) = \mu(\emptyset) =0$,
and 
$\rho(N)$ and $\mu(N)$ are finite.

What is called an {\em \Mnat-convex polyhedron} 
in discrete convex analysis \cite{Mdcasiam} is in fact 
a synonym of a g-polymatroid.
In this paper we rely on the following characterization of an \Mnat-convex polyhedron,
where
$\unitvec{i}$ denotes the $i$th unit vector 
(characteristic vector of singleton set $\{ i \}$) for $i \in N$.

\begin{theorem}[{\cite[Theorem~17.1]{Fuj05book}}]  \label{THmnatedge}
A nonempty polyhedron $P$ in $\RR\sp{n}$ is a g-polymatroid (\Mnat-convex polyhedron) 
if and only if for every point $x$ in $P$ the tangent cone of $P$ at $x$
is generated by vectors chosen from 
$\unitvec{i}$, $-\unitvec{i}$
$(i \in N)$ and
$\unitvec{i} - \unitvec{j}$
$(i,j \in N)$.
\end{theorem}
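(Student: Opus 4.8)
The plan is to describe each tangent cone through the constraints active at the point and then to translate the requirement ``generated by $\mathcal{V}$'' into a combinatorial statement about the active sets, where
\[
\mathcal{V}=\{\,\unitvec{i},\ -\unitvec{i}\ (i\in N),\ \unitvec{i}-\unitvec{j}\ (i,j\in N)\,\}
\]
and I write $\unitvec{X}=\sum_{i\in X}\unitvec{i}$ for the characteristic vector of $X\subseteq N$, so that $\langle\unitvec{X},d\rangle=d(X)$. Two facts about $\mathcal{V}$ drive the argument. First, a half-space $\{d\mid\langle a,d\rangle\le 0\}$ is generated by a subset of $\mathcal{V}$ if and only if $a$ is a nonzero multiple of some $\pm\unitvec{X}$; this is seen by checking that $a^{\perp}$ is spanned by members of $\mathcal{V}$ exactly when $a=\pm\unitvec{X}$ (for any other $a$ the members of $\mathcal{V}$ lying in $a^{\perp}$ span only a proper subspace). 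Second, for a ring family $\mathcal{F}$ (closed under union and intersection), realized as the down-sets of a preorder $\preceq$ on its ground set, the cone $\{d\mid d(X)\le 0\ (X\in\mathcal{F})\}$ is generated by the vectors $-\unitvec{i}$ for the $\preceq$-maximal $i$ together with $\unitvec{j}-\unitvec{i}$ over the covering pairs $i\prec j$; this is a network-flow decomposition of an arbitrary feasible $d$ and, like the integrality invoked below, rests on the total unimodularity of the matrix whose rows are the elements of $\mathcal{V}$.

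For necessity, fix $x\in P$ for a g-polymatroid $P$ with paramodular pair $(\mu,\rho)$ and set $\mathcal{F}=\{X\mid x(X)=\rho(X)\}$, $\mathcal{G}=\{X\mid x(X)=\mu(X)\}$. A routine uncrossing argument turns submodularity of $\rho$ into closure of $\mathcal{F}$ under union and intersection, supermodularity of $\mu$ into the same for $\mathcal{G}$, and paramodularity \eqref{subsupmod} into the cross-closure $X\in\mathcal{F},\,Y\in\mathcal{G}\Rightarrow X\setminus Y\in\mathcal{F},\ Y\setminus X\in\mathcal{G}$. Since the tangent cone is exactly $\{d\mid d(X)\le 0\ (X\in\mathcal{F}),\ d(Y)\ge 0\ (Y\in\mathcal{G})\}$, the second fact handles the upper family $\mathcal{F}$, and the cross-closure is precisely what lets the lower family $\mathcal{G}$ be folded into the same network, so that the whole cone is generated by $\mathcal{V}$.

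For sufficiency, assume every tangent cone is generated by $\mathcal{V}$. Applying the first fact at a relative-interior point of each facet, where the tangent cone is a single half-space, shows that every facet normal is a multiple of some $\pm\unitvec{X}$; hence $P=\{x\mid \mu(X)\le x(X)\le\rho(X)\ (X\subseteq N)\}$ with the tightest choices $\rho(X)=\sup\{x(X)\mid x\in P\}$ and $\mu(X)=\inf\{x(X)\mid x\in P\}$. It then remains to verify that this pair is paramodular. Submodularity of $\rho$, supermodularity of $\mu$, and \eqref{subsupmod} are each obtained by an exchange argument: at a point where two of the relevant constraints are simultaneously tight, the directions $\unitvec{i}-\unitvec{j}$ guaranteed to lie in the tangent cone let one transfer weight between coordinates and move an optimizer for one pair of sets into optimizers for the complementary pair, which yields the inequality.

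The step I expect to be the main obstacle is the combinatorial core common to both directions, namely reconciling the two competing descriptions of a $\mathcal{V}$-generated cone: the primal description as $\mathrm{cone}(S)$ with $S\subseteq\mathcal{V}$ and the dual description by sign conditions on the sums $d(X)$. In the necessity part the delicate point is that the cross-closure really does suffice to merge the upper and lower tight families into one network; the example $\{x\mid x_1+x_2\le 1,\ x_2+x_3\le 1\}$ shows that facet normals valued in $\{\pm\unitvec{X}\}$ alone, without the ring and cross-closure structure, do \emph{not} force the tangent cones to be generated by $\mathcal{V}$. In the sufficiency part the delicate point is the uncrossing that upgrades ``all facet normals are $\pm\unitvec{X}$'' to genuine sub/supermodularity and paramodularity, the same example showing that this upgrade fails without the tangent-cone hypothesis at lower-dimensional faces. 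Total unimodularity of the $\mathcal{V}$-matrix is the single lever behind both reconciliations.
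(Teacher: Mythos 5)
The paper itself does not prove this statement at all: Theorem~\ref{THmnatedge} is quoted as a known result from Fujishige's book \cite[Theorem~17.1]{Fuj05book}, so there is no in-paper proof to compare against, and your attempt has to stand on its own. Your overall strategy is indeed the standard textbook route (describe the tangent cone by the active constraints, uncross the tight families, decompose via total unimodularity for necessity; read off facet normals and verify paramodularity for sufficiency). However, as written it is a plan rather than a proof: the steps that carry essentially all of the difficulty are asserted, not established, and one of them fails as stated.

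Concretely: (i) In the necessity direction, the claim that cross-closure of the tight families $\mathcal{F}$ and $\mathcal{G}$ ``lets the lower family be folded into the same network'' \emph{is} the theorem, in localized form --- what must be proved is that a cone cut out by a cross-closed pair of ring families is generated by $\mathcal{V}$, and you acknowledge this as the main obstacle without giving the decomposition; so this direction is incomplete. Your ``second fact'' is also stated too loosely: for instance, for $\mathcal{F}=\{\emptyset\}$ the cone is all of $\RR\sp{n}$, which is not generated by the vectors $-\unitvec{i}$ for maximal elements and covering differences alone, so elements lying in no member (and the case $N\notin\mathcal{F}$) need separate treatment. (ii) In the sufficiency direction, the opening step breaks down when $P$ is not full-dimensional, and g-polymatroids need not be full-dimensional (base polyhedra, with $\mu(N)=\rho(N)$, are g-polymatroids). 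At a relative-interior point of a facet of such a $P$ the tangent cone is the intersection of a half-space with the direction space of the affine hull, not a half-space of $\RR\sp{n}$, so your ``first fact'' does not apply; you would need a version of it for cones with prescribed nontrivial lineality, and you would also need to argue that the affine hull itself is an intersection of hyperplanes of the form $\{x \mid x(X)=c\}$. (iii) Still in sufficiency, verifying paramodularity ``at a point where two of the relevant constraints are simultaneously tight'' presupposes that such a point exists --- e.g., that a maximizer of $x(X)+x(Y)$ over $P$ can be taken tight for both $X$ and $Y$ --- and proving that existence is precisely where the tangent-cone hypothesis has to be used; the exchange argument is named but never carried out. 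Until these three points are filled in, the proposal is an outline of the known proof rather than a proof.
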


The content of the above assertion is easier to understand 
for a bounded polyhedron (polytope).
In this special case the theorem states that a bounded polyhedron $P$ 
is an \Mnat-convex polyhedron if and only if
every edge of $P$ has a direction among the vectors 
$\unitvec{i}$, $-\unitvec{i}$
$(i \in N)$ and
$\unitvec{i} - \unitvec{j}$
$(i,j \in N)$.

The set of integer points in an integral g-polymatroid
coincides with what is called an {\em \Mnat-convex set}
in discrete convex analysis.
An \Mnat-convex set 
$S$ $(\subseteq \ZZ\sp{n})$
can be characterized by the following exchange property
\cite{Mdcasiam,MS99gp}:
\begin{description}
\item[\BnvexZb] 
For any $x, y \in S$ and $i \in \suppp(x-y)$, 
at least one of the following is true:
\\
(i) 
$x -\unitvec{i} \in S$ and $y+\unitvec{i} \in S$,
\\
(ii) 
there exists $j \in \suppm(x-y)$ such that 
$x-\unitvec{i}+\unitvec{j} \in S$ and $y+\unitvec{i}-\unitvec{j} \in S$,
\end{description}
where
\begin{equation*}  
 \suppp(z) = \{ i \in N \mid z_{i} > 0 \},
\qquad 
 \suppm(z) = \{ j \in N \mid z_{j} < 0 \}
\end{equation*}
for any vector $z \in \ZZ\sp{n}$.

It is known \cite{Fuj05book,Mdcasiam} that 
the class of \Mnat-convex polyhedra
and the class of \Mnat-convex sets
are both closed under Minkowski summation.

\begin{theorem}  \label{THmnatconvol}
The Minkowski sum $P_{1} + P_{2}$ of \Mnat-convex polyhedra 
$P_{1}, P_{2}$ $(\subseteq \RR\sp{n})$ 
is an \Mnat-convex polyhedron,
and
the Minkowski sum $S_{1} + S_{2}$ of \Mnat-convex sets 
$S_{1}, S_{2}$ $(\subseteq \ZZ\sp{n})$ 
is an \Mnat-convex set.
\end{theorem}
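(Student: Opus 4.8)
The plan is to prove the two assertions in turn, deriving the set version from the polyhedral version together with an integrality argument. For the polyhedral statement I would rely on the tangent-cone characterization in Theorem~\ref{THmnatedge}. First observe that $P_{1}+P_{2}$ is again a polyhedron (e.g.\ as the image of $\{(x,y,z)\mid x\in P_{1},\,y\in P_{2},\,z=x+y\}$ under the projection onto $z$). The heart of the matter is the behaviour of tangent cones under Minkowski summation: for any $z\in P_{1}+P_{2}$ there is a decomposition $z=x+y$ with $x\in P_{1}$, $y\in P_{2}$ lying in the relative interiors of a compatible pair of faces, for which the tangent cones satisfy
\[
T_{P_{1}+P_{2}}(z)=T_{P_{1}}(x)+T_{P_{2}}(y),
\]
which is the primal form of the fact that the normal fan of $P_{1}+P_{2}$ is the common refinement of those of $P_{1}$ and $P_{2}$. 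Granting this, Theorem~\ref{THmnatedge} tells us that each of $T_{P_{1}}(x)$ and $T_{P_{2}}(y)$ is generated by vectors drawn from $\unitvec{i}$, $-\unitvec{i}$ $(i\in N)$ and $\unitvec{i}-\unitvec{j}$ $(i,j\in N)$; their Minkowski sum is exactly the cone generated by the union of the two generating sets, hence is again generated by vectors of these three types. Theorem~\ref{THmnatedge} then certifies that $P_{1}+P_{2}$ is \Mnat-convex. I expect the tangent-cone decomposition to be the main technical obstacle of this part, as it requires selecting the decomposition of $z$ so as to match the face of $P_{1}+P_{2}$ through $z$.

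For the set version, let $S_{1},S_{2}$ be \Mnat-convex sets, so $S_{i}=\overline{S_{i}}\cap\ZZ\sp{n}$ with $\overline{S_{i}}$ integral g-polymatroids. By \eqref{minkowR} we have $\overline{S_{1}+S_{2}}=\overline{S_{1}}+\overline{S_{2}}$, which is an \Mnat-convex polyhedron by the first part. It is moreover integral, since the Minkowski sum of g-polymatroids is described by the sums $\rho=\rho_{1}+\rho_{2}$ and $\mu=\mu_{1}+\mu_{2}$ of the defining submodular/supermodular functions, and these remain integer-valued. Thus it suffices to establish the integer-decomposition (no-hole) property
\[
(\overline{S_{1}}+\overline{S_{2}})\cap\ZZ\sp{n}=S_{1}+S_{2},
\]
for then $S_{1}+S_{2}$ is the set of integer points of an integral g-polymatroid, i.e.\ \Mnat-convex. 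The inclusion $\supseteq$ is immediate; the real content is $\subseteq$, namely that every integer point of the polyhedral sum splits as a sum of one integer point from each summand.

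To produce such a splitting, fix an integer $z\in\overline{S_{1}}+\overline{S_{2}}$ and consider
\[
R=\{\,x\in\RR\sp{n}\mid x\in\overline{S_{1}},\ z-x\in\overline{S_{2}}\,\}=\overline{S_{1}}\cap(z-\overline{S_{2}}).
\]
Since g-polymatroids are closed under the reflection $x\mapsto -x$ and under integer translation, $z-\overline{S_{2}}$ is again an integral g-polymatroid, and $R\neq\emptyset$ precisely because $z\in\overline{S_{1}}+\overline{S_{2}}$. The decisive fact is that the intersection of two integral g-polymatroids is an integer polyhedron, whose ultimate reason is the box-total-dual-integrality (equivalently, the unimodularity) of the governing system emphasised in Section~\ref{SCpreMat}. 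Hence $R$ contains an integer point $x\in\ZZ\sp{n}$, and then $x\in S_{1}$, $z-x\in S_{2}$, so $z\in S_{1}+S_{2}$; this gives the inclusion $\subseteq$ and completes the argument.

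I would flag the integrality of g-polymatroid intersection as the crux of the set part. An alternative route that stays entirely within the exchange property is to verify \BnvexZ directly for $S_{1}+S_{2}$: given $z,w\in S_{1}+S_{2}$ and $i\in\suppp(z-w)$, write $z=x_{1}+x_{2}$, $w=y_{1}+y_{2}$ with $x_{k},y_{k}$ in $S_{k}$, note that $i$ lies in $\suppp(x_{1}-y_{1})$ or $\suppp(x_{2}-y_{2})$, and apply \BnvexZ inside the corresponding summand. The difficulty is that the exchange partner $j$ produced inside (say) $S_{1}$ need not belong to $\suppm(z-w)$; overcoming this requires choosing the decompositions so as to minimise $\|x_{1}-y_{1}\|_{1}$ and running a descent argument, and that is where the genuine work of the elementary approach concentrates.
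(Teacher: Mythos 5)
Your proposal is correct, but it takes a genuinely different route from the paper. The paper obtains Theorem~\ref{THmnatconvol} as a corollary of its main result: by Theorem~\ref{THmnatedge}, the \Mnat-convex polyhedra are exactly the class $\mathcal{P}_{A}$ for the unimodular matrix $A=A\sp{\hbox{\scriptsize M}}$ with columns $\unitvec{i}$ $(i\in N)$ and $\unitvec{i}-\unitvec{j}$ $(i<j)$ (Example~\ref{EXmconvedge}), and Theorem~\ref{THunimosysDC} establishes DCP1 and DCP2 for an \emph{arbitrary} unimodular $A$ by a self-contained argument: Proposition~\ref{PRlinlPiFi} selects a decomposition $z=x+y$ whose minimal faces satisfy $\lins(F_{1})\cap\lins(F_{2})=\linl(P_{1})\cap\linl(P_{2})$, and the integral splitting $z=x\sp{*}+y\sp{*}$ is then extracted by extending a basis of $\lins(F_{1})$ to a basis of $\RR\sp{n}$ within the columns of $A$ and invoking unimodularity of the resulting square submatrix. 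Your polyhedral part is essentially the dual of the paper's (tangent cones $T_{P_{1}+P_{2}}(z)=T_{P_{1}}(x)+T_{P_{2}}(y)$ versus faces $F=F_{1}+F_{2}$ with $\lins(F)=\lins(F_{1})+\lins(F_{2})$); note that the tangent-cone identity holds for \emph{every} decomposition $z=x+y$, since $N_{P_{1}+P_{2}}(x+y)=N_{P_{1}}(x)\cap N_{P_{2}}(y)$ and polarity turns intersection into sum for polyhedral cones, so the ``careful selection'' you flag as the main obstacle of that part is not actually needed. The genuine divergence is in the integer part: you reduce the no-hole property to the integrality of $\overline{S_{1}}\cap(z-\overline{S_{2}})$, i.e., to the g-polymatroid intersection theorem, which is a classical and valid route, but it outsources exactly the hard content that the paper's unimodularity argument proves from scratch; your route is shorter given that literature, while the paper's is elementary and applies verbatim to every unimodular system, not just $A\sp{\hbox{\scriptsize M}}$. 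Two smaller remarks: your appeal to the border functions $\rho_{1}+\rho_{2}$, $\mu_{1}+\mu_{2}$ to get integrality of $\overline{S_{1}}+\overline{S_{2}}$ quietly presupposes the polyhedral Minkowski-sum theorem for g-polymatroids (part of what is being proven) and is avoidable, since by \eqref{minkowR} we have $\overline{S_{1}}+\overline{S_{2}}=\overline{S_{1}+S_{2}}$, a polyhedron that is the convex hull of a set of integer points and hence an integer polyhedron; and the sketched direct verification of \BnvexZ for $S_{1}+S_{2}$ is indeed where the real combinatorial work would lie, but as it is offered only as an alternative, its incompleteness does not affect your main argument.
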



\section{Description of the Theorem}
\label{SCtheorem}


We consider the following two conditions 
for a family $\mathcal{P}$ of polyhedra in $\RR\sp{n}$:
\begin{itemize}
\item
{\bf DCP1}:
Each $P \in \mathcal{P}$ is an integer polyhedron,

\item
{\bf DCP2}:
For any $P_{1}, P_{2} \in \mathcal{P}$, we have
$P_{1} + P_{2}  \in \mathcal{P}$ and
\begin{equation} \label{minkowNoholeDCP2}
(P_{1} + P_{2}) \cap \ZZ\sp{n} = (P_{1} \cap \ZZ\sp{n}) + (P_{2} \cap \ZZ\sp{n}),
\end{equation}
\end{itemize}
where DC stands for Discrete Convexity and  P for Polyhedron. 
For such $\mathcal{P}$, the corresponding family
of subsets of $\ZZ\sp{n}$, denoted by
\begin{equation} \label{famCfromFamP}
\mathcal{C} = \{ P \cap \ZZ\sp{n} \mid P \in \mathcal{P} \} ,
\end{equation}
satisfies the following two conditions:
\begin{itemize}
\item
{\bf DC1}:
For any $S \in \mathcal{C}$,
$\overline{S}$ is an integer polyhedron and
$S = \overline{S} \cap \ZZ\sp{n}$,

\item
{\bf DC2}:
For any $S_{1}, S_{2} \in \mathcal{C}$,
we have $S_{1} + S_{2} \in \mathcal{C}$.
\end{itemize}

\noindent
The condition DC2 says that $\mathcal{C}$ is closed under Minkowski summation.

Conversely, if a family $\mathcal{C}$ of subsets of $\ZZ\sp{n}$
satisfies DC1 and DC2, then
\begin{equation} \label{famPfromFamC}
\mathcal{P} = \{ \overline{S} \mid S \in \mathcal{C} \}
\end{equation}
satisfies DCP1 and DCP2.
This can be seen as follows.
For any $S \in \mathcal{C}$, its convex hull
$P=\overline{S}$ is an integer polyhedron by DC1, showing DCP1.
To show DCP2, let 
$P_{i} = \overline{S_{i}}$ for $S_{i} \in \mathcal{C}$ $(i=1,2)$.
We have
$S_{i} = P_{i} \cap \ZZ\sp{n}$ $(i=1,2)$
by DC1 and $S_{1}+ S_{2} \in \mathcal{C}$ by DC2.
On noting 
$P_{1} + P_{2}  
=\overline{S_{1}} + \overline{S_{2}} 
= \overline{S_{1}+ S_{2}}$
from \eqref{minkowR}, we obtain
$P_{1} + P_{2} = \overline{S_{1}+ S_{2}} \in \mathcal{P}$
and
$(P_{1} + P_{2}) \cap \ZZ\sp{n} 
= \overline{S_{1}+ S_{2}} \cap \ZZ\sp{n} = S_{1}+ S_{2}
=(P_{1} \cap \ZZ\sp{n}) + (P_{2} \cap \ZZ\sp{n})$.

In the following we construct a class $\mathcal{P}$ of polyhedra
with properties DCP1 and DCP2 from a unimodular matrix.
This in turn yields a class $\mathcal{C}$ of discrete sets satisfying DC1 and DC2.
To state the theorem we need some notations.
For any nonempty set $F$ $(\subseteq \RR\sp{n})$, we define
\begin{equation} \label{linspdef}
 \lins(F) = \{ \lambda (x-y) \in \RR\sp{n} \mid x, y \in F, \lambda \in \RR \},
\end{equation}
which is the subspace of $\RR\sp{n}$
parallel to the minimal affine space containing $F$.
Given an $n \times m$ unimodular matrix $A$ (with $\rank A = n$),
we denote by $\mathcal{P}_{A}$
the family of all integer polyhedra $P$ $(\subseteq \RR\sp{n})$
such that for any face $F$ of $P$, 
the subspace $\lins(F)$ is generated by some column vectors of the matrix $A$.
That is,
\begin{align}
\mathcal{P}_{A} =  \{ P \subseteq \RR\sp{n} \mid \ & 
 \mbox{$P$ is an integer polyhedron, and for any face $F$,}
\nonumber \\
& \mbox{$\lins(F)$ is generated by column vectors of $A$}
 \}.
\label{PAdef}
\end{align}
In this definition it is understood that
when $F$ is a vertex (zero-dimensional face),
$\lins(F)=\{ \veczero \}$
is generated by the empty subset of the column vectors of $A$.
The corresponding family 
$\mathcal{C}_{A}$
of discrete sets is defined as
\begin{equation}
\label{CAdef}
\mathcal{C}_{A} = \{ P \cap \ZZ\sp{n} \mid P \in \mathcal{P}_{A} \} .
\end{equation}

The following theorem is a formulation of
a result of Danilov--Koshevoy \cite{DK04uni}.
This theorem is expressed in the language familiar 
to researchers in optimization and operations research,
while the original paper \cite{DK04uni} employs abstract algebraic terms.

\begin{theorem}  \label{THunimosysDC}
For any $n \times m$ unimodular matrix $A$ (with $\rank A = n$),
the class $\mathcal{P}_{A}$ of polyhedra satisfies {\rm DCP1} and {\rm DCP2},
and the class $\mathcal{C}_{A}$ of discrete sets satisfies {\rm DC1} and {\rm DC2}. 
\end{theorem}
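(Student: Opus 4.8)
First I would show that the equivalence between the polyhedral conditions (DCP1, DCP2) for $\mathcal{P}_A$ and the discrete conditions (DC1, DC2) for $\mathcal{C}_A$ follows essentially from the general correspondence already laid out in Section~\ref{SCtheorem}: once DCP1 and DCP2 are proved for $\mathcal{P}_A$, the fact that $\mathcal{C}_A$ satisfies DC1 and DC2 is immediate from the discussion surrounding \eqref{famCfromFamP}. So the real work is to verify DCP1 and DCP2 for $\mathcal{P}_A$.

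Breaking this into pieces:

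\paragraph{DCP1.} This is essentially built into the definition \eqref{PAdef}, since every $P \in \mathcal{P}_A$ is required to be an integer polyhedron. So DCP1 holds by construction and requires no argument.

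\paragraph{The no-hole identity \eqref{minkowNoholeDCP2}.} Given $P_1, P_2 \in \mathcal{P}_A$, I would set $S_i = P_i \cap \ZZ^n$, so that $P_i = \overline{S_i}$ by the integer-polyhedron property. By \eqref{minkowR} we have $\overline{S_1 + S_2} = \overline{S_1} + \overline{S_2} = P_1 + P_2$, hence the identity \eqref{minkowNoholeDCP2} is equivalent to showing that $P_1 + P_2$ is itself an integer polyhedron, i.e.\ that every face of the Minkowski sum contains an integer point. The natural tool here is the fact that each nonempty face $F$ of $P_1 + P_2$ decomposes as $F = F_1 + F_2$, where $F_i$ is the face of $P_i$ on which a common supporting linear functional is maximized. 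Since $F_1, F_2$ are faces of integer polyhedra, each contains an integer point, and their sum does too. This also forces $\lins(F) \subseteq \lins(F_1) + \lins(F_2)$, which feeds directly into the closure step.

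\paragraph{Closure $P_1 + P_2 \in \mathcal{P}_A$ (the main obstacle).} The hard part will be verifying that for every face $F = F_1 + F_2$ of $P_1 + P_2$, the subspace $\lins(F)$ is generated by column vectors of $A$. The inclusion $\lins(F) \subseteq \lins(F_1) + \lins(F_2)$ shows $\lins(F)$ is \emph{spanned} by columns of $A$, but I must show it is generated in the integer/lattice sense appropriate to $\mathcal{P}_A$: that $\lins(F)$ coincides with the span of some \emph{subset} of the columns of $A$, not merely a sub-span of such a span. This is exactly where \emph{unimodularity} of $A$ must be invoked. The key lemma I expect to need is that for a unimodular $A$, any subspace that is both spanned by a subset of columns of $A$ \emph{and} equal to $\lins(F)$ for a face $F$ of an integer polyhedron in $\mathcal{P}_A$ is again column-generated --- equivalently, that the union of two column-generated subspaces $\lins(F_1) + \lins(F_2)$, when it happens to equal $\lins(F)$ for a genuine face, is column-generated. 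The unimodularity guarantees that the relevant lattice of integer points behaves compatibly with the column structure, so that sums of column-generated subspaces intersected with the face geometry stay within the admissible family. I anticipate this reduction to a lattice/linear-algebra statement about unimodular matrices will be the technical crux; the polyhedral face decomposition and the no-hole identity are comparatively routine.
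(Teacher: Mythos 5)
Your plan fails at the step you call ``comparatively routine,'' and that failure is fatal because it is precisely where the content of the theorem lies. The claimed equivalence between the identity \eqref{minkowNoholeDCP2} and the statement that $P_{1}+P_{2}$ is an integer polyhedron is false. Indeed, integrality of $P_{1}+P_{2}$ is automatic whenever $P_{1},P_{2}$ are integer polyhedra: writing $S_{i}=P_{i}\cap\ZZ\sp{n}$, the relation \eqref{minkowR} gives $P_{1}+P_{2}=\overline{S_{1}}+\overline{S_{2}}=\overline{S_{1}+S_{2}}$, which is the convex hull of a set of integer points and hence an integer polyhedron. What can still fail is that $(P_{1}+P_{2})\cap\ZZ\sp{n}$ is strictly larger than $S_{1}+S_{2}$: in Example~\ref{EXicdim2sumhole}, $S_{1}=\{(0,0),(1,1)\}$ and $S_{2}=\{(1,0),(0,1)\}$ span two integer segments whose Minkowski sum is an integer polytope containing the extra integer point $(1,1)\notin S_{1}+S_{2}$. (No unimodular $A$ admits both segments, since the directions $(1,1)$ and $(1,-1)$ form a matrix of determinant $-2$ --- which is exactly the point: unimodularity is what excludes such pairs, yet your argument for \eqref{minkowNoholeDCP2} never invokes unimodularity.) Your face-decomposition argument, ``each face $F=F_{1}+F_{2}$ contains an integer point,'' only re-proves the automatic integrality of the sum; it does not split a given integer point $z\in P_{1}+P_{2}$ into integer points of $P_{1}$ and $P_{2}$. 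That splitting is the genuine work. The paper does it in two stages: Proposition~\ref{PRlinlPiFi} first chooses the real decomposition $z=x+y$ so that the minimal faces satisfy $\lins(F_{1})\cap\lins(F_{2})=\linl(P_{1})\cap\linl(P_{2})$; then a basis of $\RR\sp{n}$ is assembled from columns of $A$ (a basis of $\lins(F_{1})$, extended by columns lying in $\lins(F_{2})$, then to a full basis), and unimodularity of the resulting $n\times n$ submatrix forces the coordinates of $d_{z}=z-(x\sp{\circ}+y\sp{\circ})$ to be integers, so that $d_{z}$ splits into integer parts $d_{x}\in\lins(F_{1})$ and $d_{y}\in\lins(F_{2})$.

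Conversely, the step you single out as the ``main obstacle'' is actually immediate and needs no unimodularity. From $F=F_{1}+F_{2}$ one gets equality $\lins(F)=\lins(F_{1})+\lins(F_{2})$, not merely an inclusion: differences within $F_{1}$ (or $F_{2}$) are realized as differences within $F$ by adding a fixed point of the other summand, so both $\lins(F_{i})$ lie in $\lins(F)$, and the reverse inclusion is clear. Moreover, the definition \eqref{PAdef} only requires $\lins(F)$ to be the \emph{linear span} of some subset of the columns of $A$ --- there is no lattice-generation condition to check --- so the union of the two generating subsets of columns generates $\lins(F)$, and $P_{1}+P_{2}\in\mathcal{P}_{A}$ follows at once. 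In short, your proposal inverts the difficulty: the closure step is trivial, the no-hole identity is the crux, and as written your proof of that identity rests on a false equivalence and omits the one place where the hypothesis of unimodularity must be used.
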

\begin{proof} 
As already mentioned,
the statement for $\mathcal{C}_{A}$ is equivalent to
that for $\mathcal{P}_{A}$.
The proof for $\mathcal{P}_{A}$ is given in Section~\ref{SCproof}.
\end{proof}

\begin{example} \rm \label{EXmconvedge}
By Theorem~\ref{THmnatedge},
an \Mnat-convex polyhedron
is characterized by vectors
$\unitvec{i}$, $-\unitvec{i}$
$(i \in N)$ and
$\unitvec{i} - \unitvec{j}$
$(i,j \in N)$
to generate tangent cones.
This means that the class of \Mnat-convex polyhedra
is given as 
$\mathcal{P}_{A}$ for
$A=A\sp{\hbox{\scriptsize M}}$
consisting of column vectors
$\unitvec{i}$
$(i \in N)$ and
$\unitvec{i} - \unitvec{j}$
$(i < j)$.
The matrix $A\sp{\hbox{\scriptsize M}}$ has 
$n$ rows and $n(n+1)/2$ columns.
When $n=4$, for example, we have
\begin{equation} \label{unimoMnatA}
A\sp{\hbox{\scriptsize M}} = 
\left[ \begin{array}{rrrr|rrrrrr}
1 & 0 & 0 & 0 & 1 & 1 & 1 & 0 & 0 & 0 
\\
0 & 1 & 0 & 0 & -1 & 0 & 0 & 1 & 1 & 0 
\\
0 & 0 & 1 & 0 & 0 & -1 & 0 & -1 & 0 & 1 
\\
0 & 0 & 0 & 1 & 0 & 0 & -1 & 0 & -1  & -1  
\\
\end{array}\right] .
\end{equation}
The matrix
$A\sp{\hbox{\scriptsize M}}$ is unimodular for general $n$,
and, by Theorem~\ref{THunimosysDC},
the class of \Mnat-convex polyhedra
is equipped with the properties DCP1 and DCP2.
Accordingly,
the class of \Mnat-convex sets,
$\mathcal{C}_{A}$ for $A=A\sp{\hbox{\scriptsize M}}$,
has the properties DC1 and DC2.
Thus Theorem~\ref{THunimosysDC} provides us with an alternative proof
of Theorem~\ref{THmnatconvol} that
the Minkowski sum of \Mnat-convex polyhedra (resp., sets) 
is an \Mnat-convex polyhedron (resp., set). 
\finbox
\end{example}

\begin{remark} \rm  \label{RMunimodMmaximal}
A unimodular matrix $A$ is said to be {\em maximal}
if it cannot be augmented by any nonzero column vector 
(other than $\pm a\sp{j}$
for a column vector $a\sp{j}$ of $A$)
without destroying the unimodularity.
The matrix $A\sp{\hbox{\scriptsize M}}$
for \Mnat-convex sets is maximal in this sense.
According to a result of 
Danilov--Koshevoy \cite{DK04uni}
showing a certain converse of Theorem~\ref{THunimosysDC},
the maximality of $A\sp{\hbox{\scriptsize M}}$ implies that 
there is no class of discrete convexity
that is a proper superclass of \Mnat-convex sets
and closed under Minkowski summation.
Maximal unimodular matrices are investigated in Danilov--Grishukhin \cite{DG99uni}.
\finbox
\end{remark}

\begin{remark} \rm  \label{RMunimodEquiv}
For an $n \times m$ unimodular matrix $A$
and an $n \times n$ square unimodular matrix $T$,
their product $A' =T A$ is an $n \times m$ unimodular matrix
and determines a class
$\mathcal{C}_{A'}$
of discrete convex sets.
This class
$\mathcal{C}_{A'}$
may be considered essentially equivalent to
$\mathcal{C}_{A}$,
since $S \in \mathcal{C}_{A}$ if and only if
$T S \in \mathcal{C}_{A'}$.
For example,
the matrix
$A'=T A\sp{\hbox{\scriptsize M}}$
obtained from
$A\sp{\hbox{\scriptsize M}}$
for \Mnat-convexity
with a diagonal matrix 
$T=\diag(1, \ldots, 1; -1, \ldots, -1 )$
corresponds to the so-called {\em twisted \Mnat-convexity},
which is used in the analysis of trading networks in economics
\cite{ISST15,IT15net,Mdcaeco16}.
There exist unimodular matrices that are not related to
$A\sp{\hbox{\scriptsize M}}$ by unimodular transformations.
Indeed, according to the classification of totally unimodular matrices mentioned 
in Section \ref{SCpreMat},
\begin{equation} \label{unimoD4}
B = 
\left[ \begin{array}{rrrr|rrrrr}
1 & 0 & 0 & 0 & 1 & 0 & 0 & 1 & 1  
\\
0 & 1 & 0 & 0 & 1 & 1 & 0 & 0 & 1  
\\
0 & 0 & 1 & 0 & 0 & 1 & 1 & 0 & 1  
\\
0 & 0 & 0 & 1 & 0 & 0 & 1 & 1 & 1   
\\
\end{array}\right]
\end{equation}
is a unimodular matrix that is independent of 
$A\sp{\hbox{\scriptsize M}}$,
and hence determines another class
$\mathcal{C}_{B}$ of discrete convex sets
satisfying  DC1 and DC2.
Such classes of discrete convex sets, different from \Mnat-convex sets,
are equally legitimate as discrete convexity,
but they await applications. 
\finbox
\end{remark}

\begin{remark} \rm  \label{RMotherproof}
Besides the original proof by Danilov--Koshevoy \cite{DK04uni},
alternative proofs for Theorem~\ref{THunimosysDC} can be found in
Baldwin--Klemperer \cite[Fact 4.9]{BK19},
Howard \cite[Theorem~4.5]{How07},
Tran--Yu \cite[Lemma 4]{TY19}.
These proofs employ algebra or tropical geometry.
\finbox
\end{remark}

\section{Proof of the Theorem}
\label{SCproof}

In this section we give an elementary proof of Theorem~\ref{THunimosysDC}.
We start with a general fact about the Minkowski sum 
of two polyhedra (independent of DCP1 and DCP2).
Recall notations
$\linl( \cdot )$ and $\lins( \cdot )$ from
\eqref{linldef} and \eqref{linspdef}, respectively.

\begin{proposition} \label{PRlinlPiFi}
Let $P_{1}$ and $P_{2}$
be polyhedra in $\RR\sp{n}$
and define
\begin{equation} \label{linlP1P2Ldef}
L :=  \linl(P_{1}) \cap  \linl(P_{2}).
\end{equation}
For any $z \in P_{1} + P_{2}$, there exist
$x \in P_{1}$ and
$y \in P_{2}$
satisfying 
$z = x + y$
and
\begin{equation} \label{linspL}
 \lins(F_{1}) \cap \lins(F_{2}) = L ,
\end{equation}
where 
$F_{1}$ denotes the minimal face of $P_{1}$ containing $x$
and 
$F_{2}$ is the minimal face of $P_{2}$ containing $y$.
\end{proposition}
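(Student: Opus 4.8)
The inclusion $L \subseteq \lins(F_1) \cap \lins(F_2)$ holds automatically for \emph{any} decomposition $z = x+y$: since $\linl(P_i) = \linl(F_i)$ by \eqref{linlPlinlF} and every lineality vector of a polyhedron lies in the subspace parallel to its affine hull, we have $\linl(P_i) \subseteq \lins(F_i)$ for $i=1,2$, whence $L \subseteq \lins(F_1)\cap\lins(F_2)$. Thus the entire content of the proposition is the reverse inclusion together with the freedom to choose the decomposition; I would phrase the task as: exhibit a decomposition for which no spurious common direction survives.

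The plan is to select the decomposition by a minimality principle. Among all pairs $(x,y)$ with $x \in P_1$, $y \in P_2$ and $x+y=z$, I would choose one minimizing $\dim F_1 + \dim F_2$, where $F_i$ is the minimal face of $P_i$ containing the respective point (equivalently, the unique face whose relative interior contains it). Such a minimizer exists because $\dim F_1 + \dim F_2$ takes values in the finite set $\{0,1,\dots,2n\}$. The claim is that this minimizing decomposition already satisfies \eqref{linspL}.

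For the verification I would argue by contradiction: suppose $d \in (\lins(F_1)\cap\lins(F_2)) \setminus L$ and perturb along $d$, setting $x(t) = x + td$ and $y(t) = y - td$, so that $x(t)+y(t)=z$ for all $t$. The key observation is that, because $d \in \lins(F_i)$ and the affine hull of a face $F_i$ meets $P_i$ exactly in $F_i$, membership $x(t) \in P_1$ is equivalent to $x(t) \in F_1$, and likewise $y(t) \in P_2 \iff y(t) \in F_2$. Hence the feasible set $I = \{ t \in \RR \mid x(t) \in P_1,\ y(t) \in P_2 \}$ is a closed interval containing $0$ in its interior, and by the characterization \eqref{linldef2} of the lineality space it equals all of $\RR$ precisely when $d \in \linl(F_1)\cap\linl(F_2) = L$. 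Since $d \notin L$, the interval $I$ has a finite endpoint $t^{*}\neq 0$. At $t^{*}$ the moving point reaches the relative boundary of $F_1$ or of $F_2$, so the minimal face containing $x(t^{*})$ or the one containing $y(t^{*})$ drops in dimension, while the other does not increase (it remains a subface of the original). This produces a decomposition with strictly smaller $\dim F_1 + \dim F_2$, contradicting minimality and forcing $\lins(F_1)\cap\lins(F_2) \subseteq L$.

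The main obstacle I anticipate is the bookkeeping of the perturbation step: one must justify carefully that feasibility in $P_i$ collapses to feasibility in the minimal face $F_i$ along directions of $\lins(F_i)$ (the fact that $\mathrm{aff}(F_i)\cap P_i = F_i$), and that reaching a finite endpoint of $I$ genuinely lowers $\dim F_1 + \dim F_2$ rather than merely relocating the point within the same faces. Everything else — existence of the minimizer, the automatic inclusion, and the equivalence $I = \RR \iff d \in L$ — is routine once the relative-interior viewpoint on minimal faces is adopted.
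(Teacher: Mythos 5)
Your proof is correct and follows essentially the same argument as the paper's: both perturb along a direction $d \in (\lins(F_{1}) \cap \lins(F_{2})) \setminus L$, using that $x$ and $y$ lie in the relative interiors of their minimal faces, until one point hits a relative boundary and $\dim F_{1} + \dim F_{2}$ strictly drops. The only difference is presentational---you select an extremal decomposition minimizing $\dim F_{1}+\dim F_{2}$ and derive a contradiction, whereas the paper iterates the descent step; your explicit use of $\mathrm{aff}(F_{i}) \cap P_{i} = F_{i}$ is a clean way to justify the boundary-hitting step that the paper states more tersely.
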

\begin{proof}
Since $\linl(P_{i}) = \linl(F_{i})$
by \eqref{linlPlinlF}
and 
$\linl(F_{i}) \subseteq \lins(F_{i})$,
we have
\[
L =  \linl(P_{1}) \cap  \linl(P_{2}) = 
 \linl(F_{1}) \cap \linl(F_{2}) 
 \subseteq \lins(F_{1}) \cap \lins(F_{2}).
\]
We shall show that if 
$L \ne \lins(F_{1}) \cap \lins(F_{2})$,
we can modify
$x$ and $y$ so that
$\dim (\lins(F_{1})) + \dim(\lins(F_{2}))$ becomes smaller.

Suppose $L \ne \lins(F_{1}) \cap \lins(F_{2})$,
and take any
\[
d \in (\lins(F_{1}) \cap \lins(F_{2}))\setminus L .
\]
Obviously, $d \ne \veczero$. 
Since $F_{1}$ is the minimal face of $P_{1}$ containing $x$,
the point $x$ lies in the relative interior of $F_{1}$.
Similarly, $y$ is in the relative interior of $F_{2}$.
It then follows that 
for any $\varepsilon \in \RR$
with sufficiently small absolute value $|\varepsilon|$,
$x +\varepsilon d $ belongs to $F_{1}$ and 
$y - \varepsilon d$ belongs to $F_{2}$.
On the other hand, it follows from
$d \notin L =  \linl(F_{1}) \cap \linl(F_{2})$
that there exists some 
$\lambda \in \RR$
for which
$[ x + \lambda d \in \mathrm{bd}(F_{1}) \mbox{ {\small and} } y - \lambda d \in F_{2} ]$
or
$[ x + \lambda d \in F_{1} \mbox{ {\small and} } y - \lambda d \in \mathrm{bd}(F_{2}) ] $
is true,
where $\mathrm{bd}(F_{i})$ denotes the boundary of the face $F_{i}$
for $i=1,2$.
With the change of
$x \leftarrow x + \lambda d$ and $y \leftarrow y - \lambda d$,
we can make smaller at least one of the minimal faces $F_{1}$ and $F_{2}$,
while maintaining
$z = x + y$, $x \in P_{1}$, and $y \in P_{2}$.
Such changes decreases 
$\dim (\lins(F_{1})) + \dim(\lins(F_{2}))$ at least by one,
and eventually we arrive at \eqref{linspL}.
\end{proof}

\medskip

We are now ready to prove Theorem~\ref{THunimosysDC}.
To be specific, we prove that $\mathcal{P}_{A}$ satisfies DCP1 and DCP2
under the assumption that $A$ is a unimodular matrix.
DCP1 is obvious from the definition \eqref{PAdef} of $\mathcal{P}_{A}$.

To prove the first property
$P_{1} + P_{2}  \in \mathcal{P}_{A}$
in DCP2, 
we recall a well-known fact 
\cite[Section~1.5]{Gru03}
that
any face $F$ of $P_{1}+P_{2}$
can be represented as $F = F_{1}+F_{2}$
for some face $F_{1}$ of $P_{1}$
and some face $F_{2}$ of $P_{2}$.
For each $i=1,2$,
$\lins(F_{i})$ is generated by columns of $A$
since $P_{i} \in \mathcal{P}_{A}$.
Therefore,
\[
\lins(F) = \lins(F_{1}+F_{2}) = \lins(F_{1})+\lins(F_{2})
\]
is generated by columns of $A$.
Thus $P_{1} + P_{2}  \in \mathcal{P}_{A}$.

In the second property
\eqref{minkowNoholeDCP2}
in DCP2,
the inclusion $\supseteq$ is obvious.
For the reverse inclusion $\subseteq$
we will show that 
for any $z \in (P_{1} + P_{2}) \cap \ZZ\sp{n}$,
there exist
$x\sp{*} \in P_{1} \cap \ZZ\sp{n}$ and
$y\sp{*} \in P_{2} \cap \ZZ\sp{n}$
that satisfy $z = x\sp{*} + y\sp{*}$.
Take any $z \in (P_{1} + P_{2}) \cap \ZZ\sp{n}$.
By Proposition \ref{PRlinlPiFi}
there exist
$x \in P_{1}$ and
$y \in P_{2}$
satisfying 
\[
z = x + y, \qquad 
\lins(F_{1}) \cap \lins(F_{2}) = L ,
\]
where
$L =  \linl(P_{1}) \cap  \linl(P_{2})$,
$F_{1}$ is the minimal face of $P_{1}$ containing $x$, and
$F_{2}$ is the minimal face of $P_{2}$ containing $y$.

Since $P_{1}$ and $P_{2}$ are integer polyhedra,
there exist integer points 
$x\sp{\circ} \in F_{1}$ and $y\sp{\circ} \in F_{2}$.
Define
\begin{equation} \label{maru1}
 d_{z} := z - (x\sp{\circ} + y\sp{\circ}) = (x - x\sp{\circ}) +  (y - y\sp{\circ}) .
\end{equation}
Since
$z, x\sp{\circ}, y\sp{\circ} \in \ZZ\sp{n}$,
$x - x\sp{\circ} \in \lins(F_{1})$, and
$y - y\sp{\circ} \in \lins(F_{2})$, we have
\begin{equation} \label{maru2}
 d_{z} \in \ZZ\sp{n}
\quad \mbox{and} \quad  
 d_{z} \in \lins(F_{1}) + \lins(F_{2}) .
\end{equation}

We construct a basis of $\RR\sp{n}$ 
with reference to $\lins(F_{1})$ and $\lins(F_{2})$,
while making use of the fact that both $\lins(F_{1})$ and $\lins(F_{2})$
are generated by column vectors of $A$.
First we choose a basis of $\lins(F_{1})$
from the column vectors of $A$.
Let $\{ a\sp{1}, \ldots, a\sp{s} \}$
be the chosen columns vectors,
where the columns are re-numbered if necessary.
Next we add some other column vectors of $A$, say,
$\{ a\sp{s+1}, \ldots, a\sp{t} \}$
to form a basis of 
$\lins(F_{1}) + \lins(F_{2})$,
where we may assume that
$\{ a\sp{s+1}, \ldots, a\sp{t} \} \subseteq \lins(F_{2})$
since there exists a basis of $\lins(F_{2})$ consisting of column vectors of $A$.
Finally, we add some column vectors of $A$, say, 
$\{ a\sp{t+1}, \ldots, a\sp{n} \}$
to form a basis of $\RR\sp{n}$.
By construction we have
\begin{equation} \label{baseinlinsFi}
 \{ a\sp{1}, \ldots, a\sp{s} \} \subseteq \lins(F_{1}), \qquad
 \{ a\sp{s+1}, \ldots, a\sp{t} \} \subseteq \lins(F_{2}).
\end{equation}
\par
Express $d_{z}$ in \eqref{maru1} 
as a linear combination of the basis vectors, i.e., 
\begin{equation} \label{maru3}
 d_{z} = \sum_{i=1}\sp{n}  \lambda_{i} a\sp{i},
\end{equation}
where such representation is unique.
We have
\begin{equation} \label{maru4}
\lambda_{i} \in \ZZ \quad (i=1,\ldots,t), \qquad
\lambda_{i} =0 \quad (i=t+1,\ldots,n),
\end{equation}
because 
$d_{z} \in \ZZ\sp{n}$, the submatrix
$[ a\sp{1},  a\sp{2}, \ldots, a\sp{n} ]$
of $A$ formed by the basis vectors is unimodular,
$d_{z} \in \lins(F_{1}) + \lins(F_{2})$
as in \eqref{maru2},
and \eqref{baseinlinsFi} holds.
Define 
\begin{equation} \label{maru6}
 d_{x} := \sum_{i=1}\sp{s}  \lambda_{i} a\sp{i}, \quad
 d_{y} := \sum_{i=s+1}\sp{t}  \lambda_{i} a\sp{i}; \quad
 x\sp{*} := x\sp{\circ} + d_{x}, \quad
 y\sp{*} := y\sp{\circ} + d_{y}.
\end{equation}
Then we have
\begin{equation} \label{maru67}
d_{x} \in \lins(F_{1}) \cap \ZZ\sp{n}, \quad
d_{y} \in \lins(F_{2}) \cap \ZZ\sp{n}, \quad
x\sp{*} \in \ZZ\sp{n}, \quad
y\sp{*} \in \ZZ\sp{n},
\end{equation}
and
\begin{align} 
& d_{z} = d_{x} + d_{y},
\label{maru7dz}
\\ &
 z = x\sp{\circ} + y\sp{\circ} + d_{z} 
=  (x\sp{\circ} + d_{x}) + (y\sp{\circ} + d_{y})
= x\sp{*} + y\sp{*}  .
\label{maru7z}
\end{align}

Finally, we show that
$x\sp{*} \in P_{1}$ and $y\sp{*} \in P_{2}$.
It follows from
$x+y = z  = x\sp{*} + y\sp{*}$
that
\[
 x - x\sp{*} = y\sp{*} - y .
\]
We have $x - x\sp{*} \in  \lins(F_{1})$,
since
$x \in F_{1}$, $x\sp{*} = x\sp{\circ} + d_{x}$, $x\sp{\circ} \in F_{1}$,  
and $d_{x} \in \lins(F_{1})$.
Similarly, we have
$y\sp{*} - y \in  \lins(F_{2})$,
since
$y \in F_{2}$, $y\sp{*} = y\sp{\circ} + d_{y}$, $y\sp{\circ} \in F_{2}$,  
and $d_{y} \in \lins(F_{2})$.
Therefore,
\begin{equation} \label{maru678}
 x - x\sp{*} = y\sp{*} - y   \in  \lins(F_{1}) \cap \lins(F_{2}).
\end{equation}
Recalling \eqref{linlP1P2Ldef} and \eqref{linspL},
we obtain
\begin{align*}
 x - x\sp{*} = y\sp{*} - y   \in 
\lins(F_{1}) \cap \lins(F_{2}) 
= L = \linl(P_{1}) \cap \linl(P_{2}),
\end{align*}
from which follows that
\begin{equation} \label{maru9}
x\sp{*} = x  - (x - x\sp{*})  \in P_{1} + \linl(P_{1}) = P_{1},
\quad
y\sp{*} = y  + (y\sp{*} - y) \in P_{2} + \linl(P_{2}) = P_{2}.
\end{equation}

Combining \eqref{maru67}, \eqref{maru7z}, and \eqref{maru9},
we obtain
$z = x\sp{*} + y\sp{*}$,
$x\sp{*} \in P_{1} \cap \ZZ\sp{n}$, and
$y\sp{*} \in P_{2} \cap \ZZ\sp{n}$,
as desired.
This completes the proof of Theorem~\ref{THunimosysDC}.


\bigskip

\noindent {\bf Acknowledgement}.
This work was supported by JSPS/MEXT KAKENHI JP23K11001 and JP21H04979, and
by JST ERATO Grant Number JPMJER2301, Japan.





\end{document}